\newtheorem{thm}{Theorem}[section]
\newtheorem{lem}[thm]{Lemma}
\theoremstyle{definition}
\newcommand{\BQ}{{\mathbb{Q}}}
\newcommand{\ceil}[1]{\left\lceil#1\right\rceil}
\newcommand{\gp}{\mathfrak{p}}
\let\lm=\lambda
\let\Lm=\Lambda
\let\abs=\envert
\theoremstyle{remark}
\begin{document}
\title{A generalization of the Ramanujan-Nagell equation\footnote{2010 Mathematics 
Subject Classification: 11D61 (Primary), 11D45 (Secondary).}
\footnote{Key words and phrases: Exponential diophantine equation, Ramanujan-Nagell equation, values of quadratic polynomials.}}
\author{Tomohiro Yamada}
\date{}
\maketitle

\begin{abstract}
We shall show that, for any positive integer $D>0$ and any primes $p_1, p_2$ not dividing $D$,
the diophantine equation $x^2+D=2^s p_1^k p_2^l$ has at most $63$ integer solutions $(x, k, l, s)$
with $x, k, l\geq 0$ and $s\in \{0, 2\}$.
\end{abstract}
\section{Introduction}\label{intro}

It is known that the equation $x^2+7=2^n$ has five solutions,
as conjectured by Ramanujan and shown by Nagell \cite{Nag2} and other authors.
According to this history, this diophantine equation has been called
the Ramanujan-Nagell equation and several authors have studied various analogues.

Ap\'{e}ry \cite{Ape} showed that, for each integer $D>0$ and prime $p$,
the equation $x^2+D=p^n$ has at most two solutions unless $(p, D)=(2, 7)$ and,
for any odd prime $p$, the equation $x^2+D=4p^n$, which is equivalent to $y^2+y+(D+1)/4=p^n$ with $y$ odd,
also has at most two solutions.
Beukers \cite{Beu1} showed that, if $D>0$ and $x^2+D=2^n$ has two solutions,
then $D=23$ or $D=2^k-1$ for some $k>3$ and also gave an effective upper bound:
if $w=x^2+D=2^n$ with $D\neq 0$, then $w<2^{435}\abs{D}^{10}$.

Further generalizations have been made by Le \cite{Le2}\cite{Le3}\cite{Le4}, Skinner \cite{Ski}
and Bender and Herzberg \cite{BH} to prove that,
for any given integers $A, B, s, p$ with $\gcd(A, B)=1, s\in \{0, 2\}$
and $p$ prime, $Ax^2+B=2^s p^n$ has at most two solutions except
$2x^2+1=3^k, 3x^2+5=2^k, x^2+11=4\times 3^k, x^2+19=4\times 5^k$ with three solutions and
the Ramanujan-Nagell one $x^2+7=2^k$ with five solutions.

Bender and Herzberg \cite{BH} also found some necessary conditions for 
the equation $D_1x^2+D_2=2^s a^n$ with $D_1>0, D_2>0, \gcd(D_1, D_2)=\gcd(D_1D_2, k)=1, s\in\{0, 2\}$
to have more than $2^{\omega(a)}$ solutions.
With the aid of the primitive divisor theorem of Bilu, Hanrot and Voutier \cite{BHV} concerning
Lucas and Lehmer sequences, Bugeaud and Shorey\cite{BuSh} determined all cases
$D_1x^2+D_2=2^m a^n$ with $D_1>0, D_2>0, \gcd(D_1, D_2)=\gcd(D_1D_2, k)=1, m\in\{0, 1, 2\}$
has more than $2^{\omega(a)-1}$ solutions, although they erroneously refer
to $2x^2+1=3^n$ as it has just two solutions $n=1, 2$, which in fact has exactly
three solutions $n=1, 2, 5$, as pointed out by Leu and Li \cite{LL}
(this fact immediately follows from Ljunggren's result \cite{Lju}
since $2x^2+1=3^n$ is equivalent to $(3^n-1)/2=x^2$).

We note that it is implicit in Le \cite{Le1} that, if $D_1>3$, then
$D_1 x^2+1=p^n$ has at most one solution except $(D_1, p)=(7, 2)$.
But it is erroneously cited in another work of Le \cite{Le5}, stating that
$D_1 x^2+1=p^n$ has at most one solutions for each $D_1\geq 1$ and odd prime $p$.
This may have caused the failure in \cite{BuSh} mentioned above.

Le \cite{Le11} studied another generalized Ramanujan-Nagell equation
$x^2+D^m=p^n$ with $m, n, x>0$, $p$ a prime not dividing $D$ to show that this equation
has at most two solutions except for some special cases.
Further studies by Bugeaud \cite{Bu1} and Yuan and Hu\cite{YH} concluded that this equation has
at most two solutions except for $(D, p)=(7, 2), (2, 5)$ and $(4, 5)$,
in which cases, this equation has, respectively, exactly six, three and three.
Hu and Le \cite{HL} showed that,
for integers $D_1, D_2>1$ and a prime $p$ not dividing $D_1 D_2$, the equation
$D_1 x^2+D_2^m=p^n, x, m, n>0$ has at most two solutions
except for $(D_1, D_2, p)=(2, 7, 13), (10, 3, 13), (10, 3, 37)$ and
$((3^{2l}-1)/a^2, 3, 4\times 3^{2l-1}-1)$ with $a, l\geq 1$,
in which cases this equation has exactly three solutions.

The diophantine equation $x^2+D=y^n$ with only $D$ given also has been studied.
Lebesgue \cite{Leb} solved this equation for $D=1$, Nagell solved for $D=3$
and Cohn \cite{Coh} solved for many values of $D$.
By the theorem of Shorey, van der Poorten, Tijdeman and Schinzel \cite{SPTS},
we have $x, y, n\leq C$ with an effectively computable constant $C$ depending only on $D$.
Combining a modular approach developed by Taylor and Wiles \cite{TW}\cite{Wil} and Bennett and Skinner \cite{BeSk} and other methods, Bugeaud, Mignotte and Siksek \cite{BMS}
solved $x^2+D=y^n$ in $(x, y, n)$ with $n\geq 3$ for each $1\leq D\leq 100$.
Furthermore, Le \cite{Le12} showed that if $x^2+2^m=y^n$ with $m, x>0$, $n>2$ and $y$ odd, then
$(x, m, y, n)=(5, 3, 1, 3), (7, 3, 5, 4)$ or $(11, 5, 2, 3)$.
Pink \cite{Pin} solved $x^2+D=y^n, n\geq 3, \gcd(x, y)=1$ for $D=2^a 3^b 5^c 7^d$
except the case $D\equiv 7\pmod{8}$ and $y$ is even.
A brief survey on further results to such equations is given by B\'{e}rczes and Pink \cite{BP}.
More recently, Godinho, Diego Marques and Alain Togb\'{e} \cite{GMT} solved
$x^2+D=y^n, n\geq 3, \gcd(x, y)=1$ for $D=2^a 3^b 17^c$ and $D=2^a 13^b 17^c$.

In this paper, we shall study another generalization of the Ramanujan-Nagell equation
\begin{equation}\label{eq0}
x^2+D=2^s p_1^k p_2^l
\end{equation}
with $s\in \{0, 2\}$.

Evertse \cite{Eve} showed that,
for every nonzero integer $D$ and $r$ prime numbers $p_1, p_2, \ldots, p_r$,
$x^2+D=p_1^{k_1} p_2^{k_2} \cdots p_r^{k_r}$ has at most $3\times 7^{4r+6}$ solutions.
Hence, (\ref{eq0}) has at most $3\times 7^{14}$ solutions
for any given $D, p_1, p_2$.
The purpose of this paper is to improve this upper bound for the number of solutions of (\ref{eq0}).
\begin{thm}\label{thm1}
For every positive integer $D$ and primes $p_1, p_2$,
(\ref{eq0}) has at most $63$ integral solutions $(x, s, k, l)$ with $k, l\geq 0, s\in \{0, 2\}$.
\end{thm}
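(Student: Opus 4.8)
The plan is to factor the left side of \eqref{eq0} in the ring of integers of $K=\BQ(\sqrt{-D})$ (or more precisely in an appropriate order), and to reduce the problem to counting solutions of a bounded number of binary-form-type equations, each of which is governed by the two-solution results quoted in the introduction. Write a solution as $x^2+D=N$ with $N=2^s p_1^k p_2^l$. The first step is to decide, for each prime $q\in\{2,p_1,p_2\}$, whether $q$ is ramified, split, or inert in $K$; only split or ramified primes can divide $N$ for large exponent, so after discarding the finitely many bounded cases this restricts $s$ and forces $p_1,p_2$ to split (or one of them to ramify, contributing an extra bounded batch of solutions). Assuming $p_1,p_2$ split, fix prime ideals $\gp_1\mid p_1$ and $\gp_2\mid p_2$; then the ideal $(x+\sqrt{-D})$ must be of the form $\mathfrak{a}\,\gp_1^{k}\gp_2^{l}$ where $\mathfrak{a}\mid(2)$ has bounded norm, and $\gp_i$ here ranges over the at most two primes above $p_i$. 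This yields a bounded number $2^{O(1)}$ of "shapes'' for the ideal factorization.

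The second step is to pass from ideals to elements. For each fixed shape, $(x+\sqrt{-D})$ being a fixed ideal means that if two solutions $(x,k,l)$ and $(x',k',l')$ have ideal factorizations with the same shape and $k\equiv k'$, $l\equiv l'$ modulo the relevant orders $h_i$ of $\gp_i$ in the class group, then $\gp_1^{k-k'}\gp_2^{l-l'}$ is principal, and dividing the two elements gives a unit times an element of small norm; in the imaginary quadratic case the unit group is finite (order $2$, $4$, or $6$), so one obtains an equation of the form $\alpha\pi_1^{a}\pi_2^{b}-\bar\alpha\bar\pi_1^{a}\bar\pi_2^{b}=\pm(\text{bounded})$, i.e. a generalized Ramanujan-Nagell equation $D_1X^2+D_2=2^{s'}\pi^n$-type or, after eliminating one exponent, exactly an equation of the form treated by Bender--Herzberg, Skinner, Le, and Bugeaud--Shorey as quoted above, which has at most two solutions outside an explicit short list of exceptions. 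The arithmetic of the exponents modulo $h_i$ and modulo the unit order partitions all solutions of a fixed shape into at most $O(1)$ classes, each class contributing $\le 2$ solutions, plus the explicitly bounded exceptional families.

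The third step is the bookkeeping: multiply the number of shapes (choices of which prime above $p_i$, and of $\mathfrak{a}\mid(2)$ with $s\in\{0,2\}$) by the number of congruence classes of exponents by the two-per-class bound, add the contributions of the ramified/inert degenerate cases and of the finitely many exceptional $(D,p_1,p_2)$, and optimize the constants so that the total is at most $63$. The main obstacle — and where the real work lies — is making the second step uniform in $D$: the two-solution theorems in the literature have exceptional cases (e.g. $x^2+7=2^k$ with five solutions, $2x^2+1=3^k$ with three), and one must check that when these exceptions are hit they occur for at most boundedly many $(D,p_1,p_2)$ and contribute only a controlled surplus over $2$, so that the final tally does not exceed $63$. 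A secondary technical point is handling the case where $K$ has class number divisible by small primes, so that $\gp_i$ can have small order $h_i$ and the number of exponent-classes could in principle blow up; this is controlled by noting that what matters is $\gcd$ structure and that ultimately only $2$-adic and $3$-adic information about the class group feeds into the count, keeping the number of shapes and classes bounded by an absolute constant.
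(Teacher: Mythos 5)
There is a genuine gap, and it sits exactly at your second step. After fixing a ``shape'' of the ideal factorization and congruence classes of the exponents, the solutions you are trying to compare still carry \emph{two} independent free exponents $k$ and $l$. Dividing two elements $x+\sqrt{-D}$ and $x'+\sqrt{-D}$ whose ideals differ by $\gp_1^{k-k'}\gp_2^{l-l'}$ produces a relation of the form $\alpha\pi_1^{a}\pi_2^{b}-\bar\alpha\bar\pi_1^{a}\bar\pi_2^{b}=c$ with \emph{both} $a$ and $b$ unknown; this is a genuine two-dimensional $S$-unit equation, not an equation of Lucas-sequence type in a single exponent $n$. The theorems of Bender--Herzberg, Skinner, Le and Bugeaud--Shorey that you invoke all concern $D_1x^2+D_2=2^sp^n$ with one prime and one exponent, and there is no general way to ``eliminate one exponent'' to land in their setting. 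Bounding the number of solutions of the two-exponent equation by a small absolute constant is precisely the hard content of the problem --- it is what Evertse's theorem does with the enormous bound $3\times 7^{4r+6}$ --- so your reduction assumes the conclusion. The subsequent bookkeeping (that the number of shapes times congruence classes times two comes to at most $63$) is likewise not carried out and has no reason to produce this particular constant.

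The paper handles the two-dimensionality quite differently: it partitions solutions according to which quarter $[j/4,(j+1)/4)$ the ratio $\log p_1^k/\log(p_1^kp_2^l)$ falls in and according to the parities of $k$ and $l$, and then, within each class, plays two growth estimates against each other. On one side, Beukers' hypergeometric (Pad\'e approximation) method shows that any solution $q$ in the same class as a large solution $w$ satisfies a polynomial bound $q\ll_D w^{O(1)}$; on the other side, two gap principles --- an elementary one from congruences modulo $p_1^fp_2^g$, and a stronger one comparing arguments of $X_i+\sqrt{-D}$ in $\BQ(\sqrt{-d})$ --- force the third or fourth solution of a class to exceed $\exp(cy_1^{1/8})$. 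These are incompatible for $y_1\geq W$, giving at most $2$ or $3$ large solutions per class; middle and small solutions are then counted by iterating the elementary gap principle and by computation. If you want to pursue your route, the step you must supply is a self-contained bound for the two-exponent equation, and that is essentially the whole difficulty.
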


It seems that we cannot use the primitive divisor theory for such types of equations.
Instead, we shall use Beukers' method.  However, we need more complicated argument than
Beukers' original argument in \cite{Beu1}.

Let $P(x)=x^2+D$.  Hence, (\ref{eq0}) can be rewritten as $P(x)=2^s p_1^k p_2^l$.
In order to extend Beukers' argument for (\ref{eq0}), we shall divide the set of solutions
of this equation.
Let $S(\alpha, \alpha+\delta, X, Y)=S_{P(x)}(\alpha, \alpha+\delta, X, Y)$ be the set of solutions
of the equation $P(x)=2^s p_1^k p_2^l$ with
$X\leq P(x)<Y, s\in\{0, 2\}$ and $(p_1^k p_2^l)^\alpha\leq p_1^k\leq(p_1^k p_2^l)^{\alpha+\delta}$
and we write $S(\alpha, \alpha+\delta)=S(\alpha, \alpha+\delta, 0, \infty)$ for brevity.
Moreover, for $u, v\pmod{2}$,
let $S(\alpha, \alpha+\delta, X, Y; u, v)=S_{P(x)}(\alpha, \alpha+\delta, X, Y; u, v)$ be the set of solutions
$x^2+D=2^s p^k q^l\in S(\alpha, \alpha+\delta, X, Y)$ with
$k\equiv u\pmod{2}, l\equiv v\pmod{2}$
and $S(\alpha, \alpha+\delta; u, v)=S(\alpha, \alpha+\delta, 0, \infty; u, v)$.
Finally, let us write $S^{(j)}(X, Y; u, v)=S(j/4, (j+1)/4, X, Y; u, v)$ for $j=0, 1, 2, 3$
and so on.

Now, we shall state our result in more detail.
\begin{thm}
Let $Y=4883601$ and $W$ be the constant defined in Lemma \ref{lm21} with $\delta=1/4$ and $\delta_1=0.04377667$.
Moreover, let $y_1$ be the smallest solution of (\ref{eq0}).
For every positive integer $D$ and primes $p_1<p_2$, we have
\begin{itemize}
\item[(i)] Each $S^{(j)}(W, \infty; u, v)$ contains at most three solutions for $j=1, 2$ and two solutions for $j=0, 3$.  Hence, there exist at most $30$ solutions with $x^2+D\geq W$.
\item[(ii)] If $D\geq Y$ or $y_1\geq Y$, then
$S^{(j)}(y_1, W)$ contains at most nine solutions for $j=1, 2$ and five solutions for $j=0, 3$.
Hence, there exist at most $28$ solutions with $x^2+D<W$.
\item[(iii)] If $D, y_1, p_2<Y$, then there exist at most $29$ solutions with $x^2+D<W$.
\item[(iv)] If $D, y_1<Y<p_2$, then
$S^{(j)}(Y, W)$ contains at most nine solutions for $j=1, 2$ and five solutions for $j=0, 3$.
Hence, there exist at most $28$ solutions with $Y\leq x^2+D<W$.
Moreover, there exist at most $5$ solutions with $x^2+D<Y$.
\end{itemize}
\end{thm}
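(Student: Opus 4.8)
The plan is to reduce everything to bounding, separately, the size of each of the finitely many classes $S^{(j)}(\,\cdot\,;u,v)$ (with $j\in\{0,1,2,3\}$, parities $u,v\pmod 2$, and $s\in\{0,2\}$), and then to assemble the stated totals by a count of which classes can be simultaneously populated; since (i) gives at most $30$ solutions with $x^2+D\ge W$ and the worst of (ii)--(iv) gives at most $33$ with $x^2+D<W$, the bound $63$ of Theorem~\ref{thm1} follows. For the individual classes I will run Beukers' hypergeometric method. For a solution $x^2+D=2^sp_1^kp_2^l$, factor $x^2+D=(x+\sqrt{-D})(x-\sqrt{-D})$ in a suitable order of $\mathbb{Q}(\sqrt{-D})$; as $\gcd(p_1p_2,D)=1$ the primes $p_1,p_2$ split, and after pinning down the $2$-part, the class group, and the unit/root-of-unity ambiguity, each of $x\pm\sqrt{-D}$ carries a fixed share of the ideal $\mathfrak{p}_1^{k}\mathfrak{p}_2^{l}$.

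The heart of the matter is that two solutions $y<y'$ lying in one class produce a very sharp Diophantine approximation. Because the class fixes the parities, $k-k'$ and $l-l'$ are even, so dividing the two factorizations and extracting a square root exhibits a fixed algebraic number $\theta$ — built from $\sqrt{-D}$, the relevant $\pi_1,\pi_2$, and roots of unity, and of bounded degree once $\alpha$ and $(u,v)$ are fixed — together with an approximant $a/b$ for which $\log|b|$ is of order $\tfrac12\log y'$ while $|\theta-a/b|$ is of order $(yy')^{-1/2}$, i.e.\ essentially twice as good as Liouville's inequality would allow. This is exactly the situation in which the Pad\'e approximants to $(1-z)^{1/2}$ yield an effective irrationality measure; Lemma~\ref{lm21} packages this, and its constant $W$ is calibrated so that once $y\ge W$ the lower bound for $|\theta-a/b|$ contradicts the upper bound above. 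Hence each class contains at most three solutions exceeding $W$ for the two middle weight-strips $j=1,2$ and at most two for $j=0,3$, which after a count of the admissible combinations yields part~(i) and the bound $30$.

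When $x^2+D<W$ this outright contradiction is no longer available, so I will combine the same approximation estimate with a gap principle — consecutive solutions in a class grow at least geometrically, at a rate that improves as the least solution $y_1$ grows — and with an explicit bound on the number of solutions below the ineffective threshold. If $D\ge Y$ or $y_1\ge Y$, the least solution is already large enough for the approximations anchored at $y_1$ to bite, forcing each $S^{(j)}(y_1,W)$ to contain at most nine (resp.\ five) solutions and giving (ii) with $28$. If $D,y_1<Y<p_2$, then every solution with $x^2+D<Y$ must have $l=0$ (since $p_2^l\le x^2+D$), i.e.\ $x^2+D=2^sp_1^k$, which is controlled by the results of Ap\'ery, Beukers and Bender--Herzberg quoted in Section~\ref{intro} and contributes at most $5$ solutions, the Ramanujan--Nagell case $D=7$ being extremal; solutions with $Y\le x^2+D<W$ are treated as in (ii), so $x^2+D<W$ admits at most $28+5=33$, which is (iv). Finally, if $D,y_1,p_2<Y$ all the relevant quantities are explicitly bounded, and a direct argument, aided by a carefully organized (not naive) search over the finitely many admissible triples $(D,p_1,p_2)$, gives (iii) with $29$.

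The main obstacle is the calibration underlying the last two paragraphs: the hypergeometric lower bound for $|\theta-a/b|$ must beat the Diophantine upper bound in \emph{every} class and \emph{every} size range at once, which is precisely why the weight interval is split into quarters rather than halves — halves do not leave enough slack in the exponents — and why the auxiliary parameters must be pushed to specific values like $\delta_1=0.04377667$ and $Y=4883601$. A secondary difficulty is keeping the algebraic bookkeeping (the $2$-part, non-maximal orders, the class group, units and roots of unity) uniform in $D$ so that $\theta$ always has controlled height and degree, and, in case (iii), organizing the finite verification so that it is genuinely feasible.
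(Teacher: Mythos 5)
Your overall architecture (split into classes by quarter-strip $j$, parity $(u,v)$ and size range; hypergeometric method for the large solutions; gap principles plus a finite search for the small ones; assemble $30+33=63$) matches the paper, but there is a genuine gap in your treatment of part (i). You claim that Lemma \ref{lm21} alone is ``calibrated so that once $y\ge W$ the lower bound for $|\theta-a/b|$ contradicts the upper bound,'' and from this you deduce the bounds $3$ and $2$ per class. That is not what Lemma \ref{lm21} delivers: it is a \emph{finiteness} statement, giving only an upper bound for any later solution $q$ in the same class in terms of an earlier one $w$ (namely $q<4^{70}w^{71}$ or the bound (\ref{eq2990})), and by itself it is compatible with arbitrarily many solutions clustered within that polynomial range. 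The indispensable second ingredient, which your proposal never supplies, is the \emph{strong} gap principle of Lemma \ref{lm31}: using the factorization in $\BQ(\sqrt{-d})$ and the fact that the linear form $\Lm=e_1\arg(X_1\pm\sqrt{-D})+e_2\arg(X_2\pm\sqrt{-D})+e_3\arg(X_3\pm\sqrt{-D})$ is a multiple of $2\pi/f$, one shows that the third (for $j=0,3$) or fourth (for $j=1,2$) solution exceeds $\exp(cy_1^{1/8})$. Only the incompatibility of this \emph{exponential} lower bound with the \emph{polynomial} upper bound of Lemma \ref{lm21} for $y_1\geq W$ yields the per-class counts. You do invoke the arithmetic of the quadratic field, but only to set up the approximated number $\theta$, which is not where the paper needs it; and you invoke gap principles, but only in the range $x^2+D<W$ and only of the weak ``geometric growth'' type coming from Lemmas \ref{lm12a} and \ref{lm12b}, which is far too weak to contradict $q<4^{70}w^{71}$.

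A related unexplained point is the asymmetry ``three for $j=1,2$, two for $j=0,3$,'' which you assert without mechanism. In the paper it comes from the elementary gap lemmas: Lemma \ref{lm12a} controls \emph{pairs} but only in the extreme strips ($\alpha=0$ or $\alpha+\delta\geq 1$), while the middle strips need \emph{triples} via Lemma \ref{lm12b}; this is why Lemma \ref{lm31} needs three elements in $S^{(j)}$ for $j=0,3$ but four for $j=1,2$, and hence why the class bounds differ by one. Your sketches of (ii)--(iv) (iterating the gap maps below $W$, the computer search when $D,y_1,p_2<Y$, and the reduction to $x^2+D=2^sp_1^k$ with at most five solutions when $p_2\geq Y$) are in line with the paper, but they too lean on the missing Lemma \ref{lm31}: the paper's counts in (ii)--(iv) use the bound $W\leq g(f^{(3)}(\cdot))$ where $g(y)=\exp(cy^{1/8})$ is precisely the strong gap function, so without that lemma the bounds $9$ and $5$ per class on $[y_1,W)$ do not follow.
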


In the next section, we prove a weaker gap principle using only elementary argument using congruences,
which is used to bound the number of middle solutions (and as an auxiliary tool to prove a stronger gap principle in Section 4).
In Section 3, we use Beukers' argument to show that
if we have one large solution $w=x^2+D$ in a class $S^{(j)}(W, \infty; u, v)$,
then other solutions in the same class as $w$ must be bounded by $w$.
Combining an gap argument proved in Section 4, we obtain an upper bound for the number of solutions in each class.
The number of small solutions can be checked by computer search.

\section{An elementary gap argument}
In this section, we shall give the following two gap principles shown by elementary arguments using congruence.

\begin{lem}\label{lm12a}
Let $x_1<x_2$ be two integers such that $y_i=x_i^2+D (i=1, 2)$ belong to the same set $S_{P(x)}(\alpha, \alpha+\delta)$,
where $\alpha, \delta$ are two real numbers satisfying $0\leq \delta<1/4$
and $\alpha=0$ or $0\leq \alpha\leq 1\leq \alpha+\delta$.
Then we have $x_2>\frac{1}{2}(P(x_1)/4)^{3/4}$.
\end{lem}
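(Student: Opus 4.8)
The plan is to combine the elementary observation that $\gcd(y_1,y_2)$ divides $y_2-y_1=x_2^2-x_1^2$ with the ``one large prime'' structure that the hypothesis on $\alpha,\delta$ forces on the factorisations. First I would fix notation: write $y_i=2^{s_i}p_1^{k_i}p_2^{l_i}$ and $N_i=p_1^{k_i}p_2^{l_i}$ for $i=1,2$, with $s_i\in\{0,2\}$; since $0\le x_1<x_2$ we have $y_1<y_2$. If $P(x_1)\le4$ the claim is immediate, its right-hand side being at most $\tfrac12<1\le x_2$, so I assume $P(x_1)=y_1\ge5$; then $N_1\ge2$, and since $y_2>y_1>4$ also $N_2\ge2$. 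Because $\delta<1/4$ and either $\alpha=0$ or $1-\delta\le\alpha\le1$, one of the two primes — call it $p$ (namely $p_2$ when $\alpha=0$ and $p_1$ when $\alpha\ge1-\delta$) — satisfies $p^{m_i}\ge N_i^{1-\delta}\ge2^{1-\delta}>1$ for both $i$, where $m_i:=v_p(y_i)$; in particular $m_1,m_2\ge1$, so $p\mid y_1$ and $p\mid y_2$. As $v_p\bigl(\gcd(y_1,y_2)\bigr)=\min(m_1,m_2)$ and $\gcd(y_1,y_2)\mid(x_2-x_1)(x_2+x_1)$, the prime power $p^{\min(m_1,m_2)}$ divides $(x_2-x_1)(x_2+x_1)$.

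The main step is to show that this whole power of $p$ divides one single factor, either $x_2-x_1$ or $x_2+x_1$. This is the point that requires care: a priori the prime powers $p_1^{\min(k_1,k_2)}$ and $p_2^{\min(l_1,l_2)}$ coming from $\gcd(y_1,y_2)$ might be spread between the two factors, which is precisely why we retain only the larger one, the power of $p$. Now $p$ is odd, $p\mid y_1$, and $p\nmid D$; if $p$ divided both $x_2-x_1$ and $x_2+x_1$ it would divide $2x_1$, hence $x_1$, hence $y_1-x_1^2=D$, a contradiction. Therefore $\gcd(x_2-x_1,x_2+x_1)$ is prime to $p$, so $p^{\min(m_1,m_2)}$ divides exactly one of the two factors; that factor is a positive integer not exceeding $x_2+x_1<2x_2$, and hence
\[
x_2>\tfrac12\,p^{\min(m_1,m_2)}=\tfrac12\min\bigl(p^{m_1},p^{m_2}\bigr)\ge\tfrac12\bigl(\min(N_1,N_2)\bigr)^{1-\delta}.
\]

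Finally I would compare $\min(N_1,N_2)$ with $P(x_1)=2^{s_1}N_1$. From $y_2>y_1$ and $s_i\in\{0,2\}$ one gets $N_2>2^{s_1-s_2}N_1\ge N_1/4$, so $\min(N_1,N_2)>N_1/4$; and when $s_1=2$ we have $4N_2\ge y_2>y_1=4N_1$, so in that case $N_2>N_1$ and $\min(N_1,N_2)=N_1$. Since $\delta<1/4$ gives $1-\delta>3/4$: if $s_1=2$ then $x_2>\tfrac12N_1^{1-\delta}\ge\tfrac12N_1^{3/4}=\tfrac12(P(x_1)/4)^{3/4}$; if $s_1=0$ then $P(x_1)/4=N_1/4$ with $N_1=P(x_1)\ge5>4$, so $x_2>\tfrac12(N_1/4)^{1-\delta}\ge\tfrac12(N_1/4)^{3/4}=\tfrac12(P(x_1)/4)^{3/4}$. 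This establishes the lemma.

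The step I expect to be the real obstacle is the second paragraph — showing that the relevant power of $p$ cannot be spread over both $x_2-x_1$ and $x_2+x_1$, equivalently that $\gcd(x_2-x_1,x_2+x_1)$ is prime to $p_1p_2$ — because everything afterwards is routine, the only bookkeeping being the factor $2^s$ relating $N_i$ to $y_i$, which is what makes it necessary to treat $s_1=0$ and $s_1=2$ separately.
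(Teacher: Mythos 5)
Your proof is correct and takes essentially the same route as the paper's: both isolate the prime $p$ whose power in each $y_i$ is at least $N_i^{1-\delta}\ge N_i^{3/4}$, deduce that $p^{\min(m_1,m_2)}$ divides $(x_2-x_1)(x_2+x_1)$ and, since $p$ is odd and prime to $D$, lies entirely in one factor, whence $2x_2>x_1+x_2\ge p^{\min(m_1,m_2)}\ge(P(x_1)/4)^{3/4}$. Your write-up merely makes explicit two points the paper leaves implicit (why the prime power cannot split between the two factors, and the bookkeeping of the $2^{s}$ factor), so no substantive difference.
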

\begin{proof}
Let $x_1<x_2$ be two integers in $S_{P(x)}(3/4, 1)$.  Then we can easily see that
$P(x_i)\equiv 0\pmod{p_1^{e_i}}$ with $p_1^{e_i}\geq (P(x_1)/4)^{3/4}$.
This implies that $P(x_1)\equiv P(x_2)\equiv 0\pmod{p_1^f}$, where $f=\min\{e_1, e_2\}$.
Hence, we have $x_1+x_2\geq p_1^f\geq (P(x_1)/4)^{3/4}$ and therefore
$x_2>\frac{1}{2}(P(x_1)/4)^{3/4}$.
Similarly, if $x_1<x_2$ are two integers in $S_{P(x)}(0, 1/4)$, then $x_2>\frac{1}{2}(P(x_1)/4)^{3/4}$.
This proves the lemma.
\end{proof}

\begin{lem}\label{lm12b}
Let $x_1<x_2<x_3$ be three integers such that $y_i=x_i^2+D (i=1, 2, 3)$ belong to the same set $S_{P(x)}(\alpha, \alpha+\delta)$
for some $0\leq \alpha\leq 1$ with $0\leq \delta\leq 1/4$.
Then we have $x_3>\frac{1}{2}(P(x_1)/4)^{3/4}$.
\end{lem}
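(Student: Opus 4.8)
The plan is to reduce the three-solution case to the two-solution gap principle of Lemma \ref{lm12a} by a pigeonhole argument on the exponent parities, or alternatively to argue directly with the prime powers dividing $P(x_1), P(x_2), P(x_3)$. The governing observation, as in the proof of Lemma \ref{lm12a}, is that if $y_i = x_i^2 + D = 2^{s_i} p_1^{k_i} p_2^{l_i}$ lies in $S_{P(x)}(\alpha,\alpha+\delta)$, then $p_1^{k_i} \geq y_i^{\alpha} \geq P(x_1)^{\alpha}$ and $p_2^{l_i} \geq y_i^{1-\alpha-\delta} \geq P(x_1)^{1-\alpha-\delta}$ (using $y_i \geq P(x_1)$ and, for the $p_2$-bound, the upper constraint $p_1^{k_i} \leq y_i^{\alpha+\delta}$ together with $2^{s_i} \leq 4$). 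Since $\delta \leq 1/4$, at least one of the two exponents $k_i$ or $l_i$ satisfies $p_j^{k_i} \geq (P(x_1)/4)^{3/4}$ is too strong in general; rather, we get $p_1^{k_i}\geq P(x_1)^{\alpha}$ and $p_2^{l_i}\geq P(x_1)^{3/4}$ when $\alpha\leq 1/4$, or $p_1^{k_i}\geq P(x_1)^{3/4}$ when $\alpha\geq 3/4$, or in the middle range a cruder bound. So the proof should split on where $\alpha$ sits.

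Concretely, I would argue as follows. If $\alpha \geq 3/4$, then every $y_i$ satisfies $p_1^{k_i} \geq y_i^{3/4} \geq P(x_1)^{3/4}$, so already $x_1$ and $x_2$ land in $S_{P(x)}(3/4,1)$ and Lemma \ref{lm12a} gives $x_2 > \frac12(P(x_1)/4)^{3/4}$, hence $x_3 > x_2$ does too. Symmetrically if $\alpha+\delta \leq 1/4$. In the remaining range $1/4 < \alpha+\delta$ and $\alpha < 3/4$, we do not get a single prime power of size $P(x_1)^{3/4}$ from one solution, but from any \emph{two} solutions $y_i, y_j$ we can extract a common divisor: writing $f_1 = \min\{k_i,k_j\}$ and $f_2 = \min\{l_i,l_j\}$, both $P(x_i)$ and $P(x_j)$ are divisible by $p_1^{f_1} p_2^{f_2}$, and since $x_i + x_j$ (or $x_j - x_i$, according to signs — here all $x_i \geq 0$ so $x_i+x_j$ works) is divisible by this common factor once $P(x_i)\equiv P(x_j)\pmod{p_1^{f_1}p_2^{f_2}}$ fails to force $x_i\equiv \pm x_j$ trivially. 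The size of $p_1^{f_1}p_2^{f_2}$ need not be as large as $P(x_1)^{3/4}$ for a single pair, but across the three pairs $\{1,2\},\{1,3\},\{2,3\}$ the exponents interleave, and a counting argument shows one pair must realize exponent sums totalling at least $3/4$ of $\log P(x_1)$ in at least one prime.

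The cleanest route, which I would pursue first, is the parity pigeonhole: among $x_1, x_2, x_3$, two of them, say $x_i$ and $x_j$ with $i<j$, have $k_i \equiv k_j \pmod 2$. Then I claim $P(x_i)$ and $P(x_j)$ share a $p_1$-power of exponent $\min\{k_i,k_j\} \geq k_i$ or $k_j$; combined with $p_1^{k_i} \geq P(x_1)^{\alpha}$ and a parallel statement for $p_2$ using $l_i, l_j$ parities — note we only have two primes and three indices, so we can force agreement of parity in \emph{one} coordinate but not necessarily both simultaneously, which is exactly why three solutions rather than two are needed. After extracting the shared modulus $m$ with $m \geq P(x_1)^{\text{(something)}}$, I conclude $x_i + x_j \geq m$ (since $x_i \not\equiv -x_j$ would be needed, but $0\le x_i<x_j$ and $D>0$ rule out $x_i\equiv x_j\pmod m$ unless $m \le x_j - x_i$, still giving $x_j \geq m/2$), hence $x_3 \geq x_j > \frac12 m \geq \frac12(P(x_1)/4)^{3/4}$ after checking the exponent bookkeeping delivers $m \geq (P(x_1)/4)^{3/4}$. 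The main obstacle is precisely this bookkeeping: verifying that with $\delta \leq 1/4$ and the parity-matched pair, the product of the guaranteed prime powers (one prime matched in parity, the other bounded from below by the $S(\alpha,\alpha+\delta)$ constraint) always reaches the $(P(x_1)/4)^{3/4}$ threshold; handling the factor $4$ from $2^s$; and dealing with the degenerate possibilities where $x_i = x_j$ is forced or where a $k_i$ or $l_i$ is zero. I expect one needs to case-split on which of $\alpha \leq 1/2$ or $\alpha \geq 1/2$ holds to decide whether the $p_1$-side or $p_2$-side carries the weight, mirroring the two sub-cases ($S(3/4,1)$ and $S(0,1/4)$) already seen in the proof of Lemma \ref{lm12a}.
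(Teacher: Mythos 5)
Your opening observation is the right one and coincides with the paper's starting point: from the definition of $S(\alpha,\alpha+\delta)$ one gets, for every $i$, both $p_1^{k_i}\geq (P(x_1)/4)^{\alpha}$ and $p_2^{l_i}\geq (P(x_1)/4)^{3/4-\alpha}$ (the latter using $\delta\leq 1/4$), so all three $P(x_i)$ are divisible by a \emph{common composite modulus} $m=p_1^{f}p_2^{g}\geq (P(x_1)/4)^{3/4}$. The gap is in what you do next. Your ``cleanest route'', the parity pigeonhole on the $k_i$, is a red herring: common divisibility by $p_1^{\min\{k_i,k_j\}}$ holds for every pair regardless of parity, and exponent parities play no role in this lemma (they matter only later, in Lemma \ref{lm21}). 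More seriously, your pairwise conclusion ``$x_i+x_j\geq m$ (or $x_j-x_i\geq m$)'' is false for a composite modulus: $x_i^2\equiv x_j^2\equiv -D\pmod{p_1^{f}p_2^{g}}$ only gives $p_1^{f}p_2^{g}\mid (x_i-x_j)(x_i+x_j)$, and the two prime powers may split between the two factors, so a single pair only yields $x_j>\frac{1}{2}\max\{p_1^{f},p_2^{g}\}$, which can be far below $\frac{1}{2}m$. This is precisely why two solutions do not suffice here and the lemma needs three; your proposal never supplies the mechanism that converts ``three solutions'' into the stated bound.

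The paper closes this gap by counting residues rather than by pairing: the congruence $X^2+D\equiv 0\pmod{p_1^{f}p_2^{g}}$ has exactly four roots $0<X_1<X_2<X_3<X_4<p_1^{f}p_2^{g}$, paired by $X_1+X_4=X_2+X_3=p_1^{f}p_2^{g}$, so only two of them lie below $\frac{1}{2}p_1^{f}p_2^{g}$. If $x_3\leq \frac{1}{2}p_1^{f}p_2^{g}$, then $x_1,x_2,x_3$ would be three distinct integers in that range, each necessarily equal to $X_1$ or $X_2$, a contradiction; hence $x_3>\frac{1}{2}p_1^{f}p_2^{g}\geq\frac{1}{2}(P(x_1)/4)^{3/4}$. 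Your case split on $\alpha\geq 3/4$ versus $\alpha+\delta\leq 1/4$ is sound but unnecessary once this counting step is in place, and in the middle range of $\alpha$ your proposal has no complete argument. Replace the parity/pairing discussion with the four-root count and the proof goes through.
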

\begin{proof}
For each $i=1, 2, 3$, we have
$P(x_i)\equiv 0\pmod{p_1^f p_2^g}$, where
$f=\ceil{\alpha\log (P(x_1)/4)/\log p_1}$ and
$g=\ceil{\left(\frac{3}{4}-\alpha\right)\log (P(x_1)/4)/\log p_2}$.

We see that the congruent equation
$X^2+D\equiv 0\pmod{p_1^f p_2^g}$ has exactly four distinct solutions
$0<X_1<X_2<X_3<X_4<p_1^f p_2^g$ with $X_1+X_4=X_2+X_3=p_1^f p_2^g$.
Hence, we have $X_3, X_4>\frac{1}{2}p_1^f p_2^g$ and $x_3>\frac{1}{2}p_1^f p_2^g\geq \frac{1}{2}(P(x_1)/4)^{3/4}>\frac{1}{2^{5/2}}x_1^{3/2}$.
\end{proof}

\section{Hypergeometric functions and finiteness results}

Let $F(\alpha, \beta, \gamma, z)$ be the hypergeometric function given by the series
\begin{equation}
1+\frac{\alpha\cdot\beta}{1\cdot\gamma}z+\frac{\alpha(\alpha+1)\beta(\beta+1)}{1\cdot 2\cdot\gamma(\gamma +1)}z^2+\cdots,
\end{equation}
converging for all $\abs{z}<1$ and for $z=1$ if $\gamma>\alpha+\beta$.
Define
$G(z)=G_{n_1, n_2}(z)=F(-\frac{1}{2}-n_2, -n_1, -n, z), H(z)=G_{n_1, n_2}(z)=F(-\frac{1}{2}-n_1, -n_2, -n, z)$
and $E(z)=F(n_2 +1, n_1+\frac{1}{2}, n+2, z)/F(n_2 +1, n_1+\frac{1}{2}, n+2, 1)$
for positive integers $n, n_1, n_2$ with $n=n_1+n_2$ and $n_1\geq n_2$.

We quote some properties from Lemmas 2-4 of \cite{Beu1}:
\begin{enumerate}
\item[(a)]$\abs{G(z)-\sqrt{1-z}H(z)}<z^{n+1}G(1)$,
\item[(b)]$\binom{n}{n_1}G(4z)$ and $\binom{n}{n_1}H(4z)$ are polynomials with integer coefficients of degree $n_1$ and $n_2$
respectively,
\item[(c)]$G(1)<G(z)<G(0)=1$ for $0<z<1$,
\item[(d)]$G(1)=\binom{n}{n_1}^{-1}\prod_{m=1}{n_1}\left(1-\frac{1}{2m}\right)$ and
\item[(e)]$G_{n_1 +1, n_2 +1}(z)H_{n_1, n_2}(z)-G_{n_1, n_2}(z)H_{n_1 +1, n_2 +1}(z)=cz^{n+1}$
for some constant $c\neq 0$.
\end{enumerate}

Now we obtain the following upper bound for solutions of (\ref{eq0}) relative to a given large one.

\begin{lem}\label{lm21}
Let $\alpha, \delta$ and $\delta_1$ be real numbers with $0\leq \alpha<\alpha+\delta\leq 1$
and $0<\delta_1<1/12$ and
$A, B, w, q, s_1, k_1, l_1, s_2, k_2, l_2$ be nonnegative integers such that
both $A^2+D=w=2^{s_1}p_1^{k_1}p_2^{l_1}$ and $B^2+D=q=2^{s_2}p_1^{k_2}p_2^{l_2}$
belong to $S(\alpha, \alpha+\delta; u, v)$ with $B>A$.
Moreover, put $W_1=(2^{772+210\delta}D^{241})^{1/(35(2-3\delta)-(3\delta+1)/2)}, W_2=(2^{22/9+2\delta/3}3^{7/3})^{1/\delta_1}$
and $W=\max\{W_1, W_2\}$.

If $w\geq W$, then $q<4^{70}w^{71}$ or
\begin{equation}\label{eq2990}
q^{1-\frac{1}{2}\left(\frac{5}{3}+\delta+\delta_1\right)}<2^{\frac{31}{9}+s_1+\frac{2}{3}\delta}3^{\frac{16}{3}}Dw^{\frac{19}{6}+\frac{3}{2}\delta-\frac{1}{2}\left(\frac{5}{3}+\delta+\delta_1\right)}.
\end{equation}
\end{lem}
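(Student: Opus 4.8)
The plan is to run Beukers' hypergeometric argument from \cite{Beu1}, supplemented by what is needed to handle two odd primes. Work in $K=\BQ(\sq{-D})$. Since $p_1,p_2\nmid D$, the relations $A^2+D=w=2^{s_1}p_1^{k_1}p_2^{l_1}$ and $B^2+D=q=2^{s_2}p_1^{k_2}p_2^{l_2}$ prescribe the factorisations of $(A\pm\sq{-D})$ and $(B\pm\sq{-D})$ into powers of the finitely many primes of $K$ above $2,p_1,p_2$; I would dispose of the ramified cases, and of a class number $>1$, by passing to auxiliary powers. The class $S(\alpha,\alpha+\delta;u,v)$ contributes two things. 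First, $s_1,s_2$ are even and $k_1\equiv k_2$, $l_1\equiv l_2\pmod 2$, so $w/q=M^2/N^2$ in lowest terms; with $g=\gcd(w,q)$ one gets $w=gM^2$, $q=gN^2$, and the \emph{rational} number $\beta:=AN/(BM)=A\sq{q}/(B\sq{w})$ satisfies $\beta^{2}=(1-D/w)/(1-D/q)$, whence
\[
z\;:=\;1-\beta^{2}\;=\;\frac{D(q-w)}{w(q-D)}
\]
is a positive rational with $\sq{1-z}=\beta$ and $\abs z\ll D/w$ — \emph{small}, because $w\ge W$ — and with $\operatorname{den}z$ dividing $w(q-D)$. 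Second, the two-sided bound $(p_1^{k}p_2^{l})^\alpha\le p_1^{k}\le(p_1^{k}p_2^{l})^{\alpha+\delta}$ gives $g\ge(w/4)^{1-\delta}$ (so $M\ll w^{\delta/2}$ and $N\le\sq q$) and forces $k_2/k_1$, $l_2/l_1$ to lie within a factor $1+O(\delta)$ of $(\log q)/(\log w)$.

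I would then feed $z$ to the hypergeometric approximants. Since $\abs z\ll D/w$ is tiny, property (a) makes the linear form $G_{n_1,n_2}(z)-\beta H_{n_1,n_2}(z)=G_{n_1,n_2}(z)-\sq{1-z}\,H_{n_1,n_2}(z)$ exceptionally small, $\ll\abs z^{\,n+1}G_{n_1,n_2}(1)$; by (b), (c), (d), clearing denominators turns it into an integer $\Lambda_{n_1,n_2}$ with $\abs{\Lambda_{n_1,n_2}}\ll BM\binom n{n_1}(\operatorname{den}z)^{n_1}\abs z^{\,n+1}G_{n_1,n_2}(1)$; and by (e), $\Lambda_{n_1,n_2}$ and $\Lambda_{n_1+1,n_2+1}$ cannot both vanish, so one of them has $\abs{\Lambda}\ge1$. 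Combining these facts as in \cite{Beu1} — the remainder function $E$ supplying the control of the second-best approximant that upgrades the conclusion from a mere gap principle to an \emph{effective} bound on $q$ in terms of $w$ and $D$, valid precisely because $w\ge W$ makes $\abs z$ small enough — and then choosing the degrees $n_1\ge n_2$ in the optimal ratio (the one prescribed by $(\log q)/(\log w)$, equivalently by $k_2/k_1$), estimating the factors $\binom n{n_1}$ and $G_{n_1,n_2}(1)$ by (d), and using $w\ge W_1$ and $w\ge W_2$ to absorb the residual power of $2$ and the surplus power of $D$ into a power of $w$ — which is what the exponents $241$, $\tfrac{22}9+\tfrac{2\delta}3$, $\tfrac73$ in $W_1,W_2$ encode — yields exactly (\ref{eq2990}); the slack $\delta_1\in(0,\tfrac1{12})$ is the price of rounding the optimal $n$ to an integer. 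The additional layer over Beukers' original argument is that, $w$ and $q$ now carrying two primes, the auxiliary relation between the two solutions (hence the integrality of the $\Lambda_{n_1,n_2}$) holds only up to a controlled factor governed by $k_2/k_1\approx l_2/l_1$, which has to be tracked throughout. Finally, this machinery only forces $q$ down when $n=n_1+n_2$ can be taken of size on the order of $70$, i.e.\ — by $k_2/k_1\approx l_2/l_1\approx(\log q)/(\log w)$ — when $q\ge4^{70}w^{71}$; in the complementary range $q<4^{70}w^{71}$, the first alternative of the statement, there is nothing to prove.

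The step I expect to be the real obstacle is the explicit constant-chasing in the second paragraph: every numerical constant must be carried through the clearing of denominators, the non-vanishing input from (e), the conversion to an effective bound via $E$, and the optimisation over $(n_1,n_2)$, finely enough that $w\ge W$ swallows the accumulated error and the exponents $\tfrac53+\delta+\delta_1$ and $\tfrac{19}6+\tfrac32\delta$ come out on the nose. A secondary, routine but unavoidable, difficulty is the algebraic case analysis of the first paragraph — ramification of $2$ or of a $p_i$, a nontrivial class group, the precise way $g=\gcd(w,q)$ splits over $A\mp B$, and the possibility that $z$ has an unusually small denominator — each of which must be checked to leave (\ref{eq2990}) unchanged.
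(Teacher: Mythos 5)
Your outer shell (Beukers' approximants $G,H$, integrality via property (b), nonvanishing via (e), the parity conditions $k_1\equiv k_2$, $l_1\equiv l_2\pmod 2$ making the relevant square roots rational, and $W_1,W_2$ absorbing the explicit constants) matches the paper's, but the object you feed to the approximants does not, and the change is fatal. You evaluate at $z=1-\beta^2=D(q-w)/(w(q-D))$ with $\beta=A\sqrt q/(B\sqrt w)$ rational, and clear denominators to get an integer $\Lambda_{n_1,n_2}$ with $\abs{\Lambda}\ll BM\binom{n}{n_1}(\operatorname{den}z)^{n_1}\abs{z}^{n+1}G(1)$. But your $z$ is only analytically small ($\approx D/w$), not arithmetically: its numerator is about $D(q-w)\approx Dq$ and its denominator about $w(q-D)=wB^2\approx wq$, both of size $q$. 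Hence $(\operatorname{den}z)^{\max(n_1,n_2)}\abs z^{n+1}=(\operatorname{num}z)^{n+1}/(\operatorname{den}z)^{\min(n_1,n_2)+1}\approx D^{n+1}q^{\max(n_1,n_2)}/w^{\min(n_1,n_2)+1}\geq D^{n+1}\geq 1$ for every admissible $(n_1,n_2)$ once $q\geq 4^{70}w^{71}$, so $\abs{\Lambda}<1$ can never be established and the trivial bound $\abs{\Lambda}\geq 1$ carries no information. More structurally, pitting the rational number $\beta$ against its own rational approximants can at best yield a lower bound on how well it is approximated, i.e.\ a gap principle; nothing in your setup has $q$ entering through a quantity that is small \emph{because} $q$ is large, which is what an upper bound of the shape (\ref{eq2990}) requires.

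The paper instead evaluates at $z=D/w$, whose numerator and denominator are independent of $q$: property (b) gives $\binom{n}{n_1}G(z)=P/(4w)^{n_1}$ and $\binom{n}{n_1}H(z)=Q/(4w)^{n_2}$ with $P,Q\in\BZ$, and the error term $(4w)^{n_1}(D/w)^{n+1}$ genuinely decays. The larger solution then enters in exactly two places: through $\epsilon=\abs{B/q^{1/2}-1}<D/(2B^2)$, which is small precisely because $q$ is large and is the term that ultimately forces $q$ below $D$ times the denominator, and through the lower bound $K\geq 1/(\abs{P}\sqrt R)$ with $R=\operatorname{lcm}(q,w(4w)^{2\lambda})$, which the class condition $S(\alpha,\alpha+\delta;u,v)$ bounds by $2^{O(\lambda)}w^{(1+\delta)(2\lambda+1)}$. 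Two secondary points: the ideal-theoretic setup in $\BQ(\sqrt{-D})$ that opens your proposal, and the function $E$, play no role in this lemma (the former belongs to the gap principle of Section 4); and the threshold $q\geq4^{70}w^{71}$ is used only to secure $\lambda\geq35$ and $n_1\geq23$ for the explicit estimates, not to make $n$ of order $70$ available. To repair the proposal you would need to replace your $z$ by $D/w$ and introduce the comparison term $B/q^{1/2}$; as written, the argument cannot produce (\ref{eq2990}).
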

\begin{proof}
Substituting $z=\frac{D}{w}$, we see that
$\sqrt{1-z}=\frac{A}{w^{\frac{1}{2}}}$ and it follows from the property (b) that
\begin{equation}
\binom{n}{n_1}G(z)=\frac{P}{(4w)^{n_1}}\text{ and }\binom{n}{n_1}H(z)=\frac{Q}{(4w)^{n_2}}
\end{equation}
for some integers $P$ and $Q$.

Now the property (a) gives
\begin{equation}
\abs{\frac{P}{(4w)^{n_1}}-\frac{AQ}{w^{\frac{1}{2}}(4w)^{n_2}}}<\binom{n}{n_1}\left(\frac{D}{w}\right)^{n+1}G(1)
\end{equation}
and therefore
\begin{equation}
\abs{1-\frac{AQ}{w^{\frac{1}{2}}(4w)^{n_2-n_1}P}}<\frac{(4w)^{n_1}}{\abs{P}}\binom{n}{n_1}\left(\frac{D}{w}\right)^{n+1}G(1).
\end{equation}

Letting
\begin{equation}
K=\abs{\frac{B}{q^{\frac{1}{2}}}-\frac{AQ}{w^{\frac{1}{2}}(4w)^{n_2-n_1}P}},
\end{equation}
we have
\begin{equation}\label{eq2991}
K<\epsilon+\frac{(4w)^{n_1}}{\abs{P}}\binom{n}{n_1}\left(\frac{D}{w}\right)^{n+1}G(1),
\end{equation}
where
\begin{equation}
\epsilon=\abs{\frac{B}{q^\frac{1}{2}}-1}<\frac{D}{2B^2}.
\end{equation}

Let $\lm$ be the integer such that $(4w)^{\lm-1}<(q/w)^{1/2}\leq (4w)^\lm$
and choose $n_1, n_2$ such that $\frac{2}{3}\lm-\frac{2}{3}\leq n_1\leq\frac{2}{3}\lm +1,
n_2=n_1+\lm$ and $K\neq 0$.  Following the proof of Theorem 1 in \cite{Beu1}, the property (e) allows such choice.  Moreover, we may assume without loss of generality that $q\geq 4^{70}w^{71}$, which yields that $\lm\geq 35$ and $n_1\geq 23$.

Let $R$ be the l.c.m. of $q$ and $w(4w)^{2\lm}$.  Then, since 
$k_1\equiv l_1, k_2\equiv l_2\pmod{2}$ and we have chosen $n_1, n_2$
such that $K\neq 0$, we see that the denominator $K$ must divide $R^{1/2}\abs{P}$.

Since both $w$ and $q$ belong to $S(\alpha, \alpha+\delta; u, v)$,
we have $p_1^{k_2}\leq (q/2^{s_2})^{\alpha+\delta}\leq (2^{4\lm-s_2} w^{2\lm+1})^{\alpha+\delta}$
and $p_2^{l_2}\leq (q/2^{s_2})^{1-\alpha}\leq (2^{4\lm-s_2} w^{2\lm+1})^{1-\alpha}$,
$p_1^{k_1}\leq w^{\alpha+\delta}$ and $p_2^{l_1}\leq w^{1-\alpha}$.
Hence, we see that $R\leq 2^{8\lm+(2\lm+1)s_1+(4\lm-s_2)\delta} w^{(1+\delta)(2\lm+1)}$
and
\begin{equation}\label{eq2992}
K\geq \frac{1}{\abs{P}\sqrt{R}}\geq \frac{1}{\abs{P}w^{(1+\delta)\left(\lm+\frac{1}{2}\right)}2^{(4+2\delta+s_1)\lm+\frac{s_1-s_2\delta}{2}}}.
\end{equation}

Combining (\ref{eq2991}) and (\ref{eq2992}) we have
\begin{equation}\label{eq2993}
\begin{split}
& \epsilon\abs{P}w^{(1+\delta)\left(\lm+\frac{1}{2}\right)}2^{(4+2\delta+s_1)\lm+\frac{s_1-s_2\delta}{2}} \\
& >1-2^{2n_1 +(4+2\delta+s_1)\lm+\frac{s_1-s_2\delta}{2}}w^{n_1+(1+\delta)\left(\lm+\frac{1}{2}\right)}\binom{n}{n_1}\left(\frac{D}{w}\right)^{n+1}G(1).
\end{split}
\end{equation}

Since $G(1)\binom{n}{n_1}=\prod_{1\leq m\leq n_1}\left(1-\frac{1}{2m}\right)<\frac{1}{8}$ for $n_1\geq 23$,
the last term of (\ref{eq2993}) is at most
\begin{equation}\label{eq2994}
\begin{split}
& 2^{2n_1 +(4+2\delta+s_1)\lm+\frac{s_1-s_2\delta}{2}}w^{n_1+(1+\delta)\left(\lm+\frac{1}{2}\right)}\left(\frac{D}{w}\right)^{n+1} \\
& \leq 2^{2n_1 +(4+2\delta+s_1)\lm+\frac{s_1-s_2\delta}{2}}w^{n_2+\delta\lm+\frac{1+\delta}{2}-(n+1)}D^{n+1} \\
& = 2^{(4+2\delta+s_1)\lm+\frac{s_1-s_2\delta}{2}}w^{\delta\lm+\frac{\delta-1}{2}}D^{\lm}\left(\frac{4D^2}{w}\right)^{n_1} \\
& \leq 2^{\left(\frac{16}{3}+2\delta+s_1\right)\lm-\frac{1}{3}}w^{\left(\delta-\frac{2}{3}\lm\right)+\frac{\delta}{2}+\frac{1}{6}}D^{\frac{7}{3}\lm-\frac{4}{3}} \\
& = \frac{w^{\frac{1}{6}+\frac{\delta}{2}}}{2^\frac{1}{3}D^\frac{4}{3}} \left(\frac{2^{16+6\delta+3s_1}D^7}{w^{2-3\delta}} \right)^{\frac{\lm}{3}} \\
& \leq\frac{1}{2},
\end{split}
\end{equation}
provided that $w^{35(2-3\delta)-(3\delta+1)/2}\geq 2^{562+210\delta+105s_1}D^{241}$, which follows from our assumption that $w\geq W\geq W_1$.
Hence, we have
\begin{equation}\label{eq2993b}
\epsilon\abs{P}w^{(1+\delta)(\lm+\frac{1}{2})}2^{(4+s_1)\lm+\frac{s_1-s_2}{2}}>\frac{1}{2}.
\end{equation}

By the property (c), we have, with the aid of Lemma 5 of \cite{Beu1},
\begin{equation}
\begin{split}
\abs{P}< & (4w)^{n_1}\binom{n}{n_1} \\
< & \frac{1}{2}\left(\frac{3}{4^{1/3}}\right)^{\frac{7}{3}\lm +2}(4w)^{\frac{2}{3}\lm +1} \\
= & 2^{-\frac{2}{9}\lm -\frac{1}{3}}3^{\frac{7}{3}\lm +2}w^{\frac{2}{3}\lm +1}.
\end{split}
\end{equation}
Now, by our assumption that $w\geq W\geq W_2$, we have
\begin{equation}
2^{\frac{4}{9}+\frac{2}{3}\delta+s_1}3^\frac{7}{3}<(4w)^{\delta_1}
\end{equation}
and therefore
\begin{equation}\label{eq2995}
\begin{split}
\abs{P}w^{(1+\delta)(\lm+\frac{1}{2})}2^{(4+2\delta+s_1)\lm+1}
< & 2^{\left(\frac{34}{9}+2\delta+s_1\right)\lm +\frac{2}{3}}3^{\frac{7}{3}\lm +2}w^{\left(\frac{5}{3}+\delta\right)\lm +\frac{3+\delta}{2}} \\
= & 2^{\frac{2}{3}}3^2 w^{\frac{3+\delta}{2}}(2^{\frac{4}{9}+s_1-\frac{4}{3}\delta}3^\frac{7}{3})^\lm(4w)^{\left(\frac{5}{3}+\delta\right)\lm} \\
\leq & 2^{4+2\delta}3^2w^{\frac{19}{6}+\frac{3}{2}\delta}(2^{\frac{4}{9}+s_1-\frac{4}{3}\delta}3^\frac{7}{3})^\lm\left(\frac{q}{w}\right)^{\frac{1}{2}\left(\frac{5}{3}+\delta\right)}\\
\leq & 2^{\frac{40}{9}+s_1+\frac{2}{3}\delta}3^{\frac{13}{3}}w^{\frac{19}{6}+\frac{3}{2}\delta}\left(\frac{q}{w}\right)^{\frac{1}{2}\left(\frac{5}{3}+\delta+\delta_1\right)}.
\end{split}
\end{equation}

Combining (\ref{eq2993b}) and (\ref{eq2995}), we have
\begin{equation}
2^{\frac{40}{9}+s_1+\frac{2}{3}\delta}3^{\frac{13}{3}}w^{\frac{19}{6}+\frac{3}{2}\delta}\left(\frac{q}{w}\right)^{\frac{1}{2}\left(\frac{5}{3}+\delta+\delta_1\right)}>\frac{1}{2\epsilon}>\frac{2q}{3D}
\end{equation}
and (\ref{eq2990}) immediately follows.
\end{proof}

\section{Arithmetic of quadratic fields and the stronger gap principle}

In this section, we shall prove a gap principle for larger solutions using some arithmetic of quadratic fields.

Let $d$ be the unique squarefree integer such that $D=B^2 d$ for some integer $B$.
We can factor $[p_1]=\gp_1 \bar\gp_1$ and $[p_2]=\gp_2 \bar\gp_2$ using some prime ideals
$\gp_1$ and $\gp_2$ in $\BQ(\sqrt{-d})$.  Moreover, if $[\alpha]=[\beta]$ in $\BQ(\sqrt{-d})$,
then $\alpha=\theta\beta$, where $\theta$ is a sixth root of unity if $d=3$,
a fourth root of unity if $d=1$ and $\pm 1$ otherwise.

Assume that $A^2+D=A^2+B^2 d=2^{2e} p_1^k p_2^l$ with $e\in \{0, 1\}$.
We must have
$[(A+B\sqrt{-d})/2^e]=\gp_1^k\gp_2^l, \bar\gp_1^{k_1}\gp_2^{l_1}, \gp_1^k\bar\gp_2^l$
or $\bar\gp_1^k\bar\gp_2^l$.
In any case we have
\begin{equation}\label{eq3998}
\left[\frac{A+B\sqrt{-d}}{A-B\sqrt{-d}}\right]=\left(\frac{\bar\gp_1}{\gp_1}\right)^{\pm k_1}\left(\frac{\bar\gp_2}{\gp_2}\right)^{\pm l_1}
\end{equation}
for some appropriate choices of signs.

We shall show a gap principle for solutions much stronger than Lemmas \ref{lm12a} and \ref{lm12b}.

\begin{lem}\label{lm31}
Let $c$ denote that constant $\sqrt{\log 2\log 3/2^{7/2}}=0.2594\cdots$.
If $x_3>x_2>x_1>10^6 D$ belong to the same set $S^{(j)}$ with $j=0$ or $3$ and $y_i=x_i^2+D$ for $i=1, 2, 3$,
then $y_3>\exp (cy_1^{1/8})$.
Furthermore, if $x_4>x_3>x_2>x_1>10^6 D$ belong to the same set $S^{(j)}$ with $j=1$ or $2$ and $y_i=x_i^2+D$ for $1\leq i\leq 4$,
then $y_4>\exp (cy_1^{1/8})$.
\end{lem}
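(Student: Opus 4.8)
The plan is to exploit the factorization identity \eqref{eq3998} in $K=\BQ(\sqrt{-d})$ together with a lower bound for a linear form in two $\gp$-adic logarithms. Write $\theta_i = (A_i+B\sqrt{-d})/(A_i-B\sqrt{-d})$ for the solution $x_i=A_i$, so that $\theta_i$ is a unit times $(\bar\gp_1/\gp_1)^{\pm k_i}(\bar\gp_2/\gp_2)^{\pm l_i}$. For two solutions $x_i<x_j$ in the same class $S^{(j)}$ the key point is that the corresponding elements $\Lambda_{ij}=\theta_i\theta_j^{-1}-1$ (or a suitable variant) are divisible by a large power of $\gp_1$ and $\gp_2$: indeed $\theta_i\equiv \theta_j$ modulo the common part of $\gp_1^{\min(k_i,k_j)}\gp_2^{\min(l_i,l_j)}$, and the containment in $S^{(j)}$ forces both $k_i$ and $l_i$ to be of size $\asymp\log y_i/\log p_1$ and $\asymp\log y_i/\log p_2$ respectively (with $j=1,2$ giving $k\equiv l\pmod 2$ in the ``balanced'' range $[1/4,3/4]$, hence requiring $4$ solutions to pin down four square-root ambiguities, and $j=0,3$ giving the ``lopsided'' range where $3$ suffice, exactly as in Lemmas \ref{lm12a} and \ref{lm12b}). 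So from the two largest solutions I get $v_{\gp_1}(\Lambda)\geq c_1\log y_1/\log p_1$ and $v_{\gp_2}(\Lambda)\geq c_2\log y_1/\log p_2$ for explicit $c_1,c_2$.

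Next I would apply a $\gp$-adic (or two-variable Baker-type) lower bound for $|\Lambda|$ in terms of the heights of $\theta_i,\theta_j$. The heights are controlled by $\log y_j$: since $|A_i|<\sqrt{y_i}$ and $B^2\mid D$, one has $h(\theta_i)\ll \log y_i$. A Matveev/Yu-type estimate then gives roughly
\[
v_{\gp_1}(\Lambda)\log p_1 + v_{\gp_2}(\Lambda)\log p_2 \ll (\log p_1)(\log p_2)\log y_j
\]
up to absolute constants and logarithmic factors. Comparing with the divisibility lower bound $v_{\gp_1}(\Lambda)\log p_1\gg \log y_1$ and $v_{\gp_2}(\Lambda)\log p_2\gg\log y_1$, and using the definition of the class to relate $\log p_1\log p_2$ to something like $(\log y_1)^{?}$ is not quite enough by itself — which is why the exponent $1/8$ appears. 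The trick (following Beukers' strategy as the introduction advertises) is to combine this with the \emph{elementary} gap principle already available: we only need the stronger conclusion when $y_1$ is genuinely large, and in that regime the elementary bound $x_3>\tfrac12(y_1/4)^{3/4}$ from Lemma \ref{lm12b} already tells us $y_2$ and $y_3$ are polynomially larger than $y_1$, so we may feed $y_2$ (of size $\gg y_1^{3/2}$) back into the transcendence estimate. Iterating, or simply using $\log p_1,\log p_2\ll \log y_1$ together with $v_{\gp_1}(\Lambda)\gg \sqrt{y_1}$-type lower bounds coming from $x_1+x_j\geq \gp_1^{\min(k)}$, pushes the contradiction down to the claimed shape $y_3>\exp(c\,y_1^{1/8})$ with $c=\sqrt{\log 2\log 3/2^{7/2}}$; the constant $2^{7/2}$ and the two prime logs $\log 2,\log 3$ strongly suggest the bound $p_1^{k}p_2^{l}\ge p_1 p_2\ge 2\cdot 3$ combined with $x_1>10^6 D$ (so $y_1^{1/2}\sim x_1$ and $(y_1/4)^{3/4}\sim x_1^{3/2}/2^{5/2}$) is substituted directly into a clean divisibility inequality $x_1+x_j\ge p_1^{k_{\min}}p_2^{l_{\min}}$ rather than into a full Baker bound.

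The main obstacle I expect is organizing the case distinction cleanly: with $j=1,2$ the parity condition $k\equiv l\pmod 2$ allows the congruence $X^2+D\equiv 0\pmod{p_1^f p_2^g}$ to have all four roots realized, so one genuinely needs four solutions $x_1<x_2<x_3<x_4$ before two of them ($x_1$ and $x_4$, say) are forced to be $\ge \tfrac12 p_1^f p_2^g$, whereas for $j=0,3$ the range $[0,1/4]$ or $[3/4,1]$ makes one of $f,g$ effectively zero so three solutions suffice — mirroring exactly the split already seen in Lemmas \ref{lm12a}–\ref{lm12b}. Getting the exponents $f=\lceil \alpha\log(y_1/4)/\log p_1\rceil$, $g=\lceil(3/4-\alpha)\log(y_1/4)/\log p_2\rceil$ to interact correctly with the constraint $\alpha\in[j/4,(j+1)/4]$ and to yield the uniform exponent $1/8$ (rather than something depending on $\alpha$) is where the bookkeeping is delicate: one takes the worst case $\alpha$ in each class, notes $p_1^f\ge (y_1/4)^{j/4}$ or $p_2^g\ge (y_1/4)^{(3-j)/4}$ whichever is larger, and then $x_3$ (resp.\ $x_4$) $\ge \tfrac12 p_1^f p_2^g \ge \tfrac12 (y_1/4)^{3/4}$, after which the transcendence input upgrades the polynomial gap to the exponential one. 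I would therefore structure the write-up as: (1) set up $\theta_i$ and \eqref{eq3998}; (2) the combinatorial/parity lemma producing four (resp.\ three) solutions and the $\gp$-adic valuation lower bounds; (3) the linear-forms-in-logarithms upper bound; (4) combine and simplify to the stated inequality, tracking constants to land on $c$.
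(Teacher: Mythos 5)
Your proposal starts from the right place (the ideal identity \eqref{eq3998}) and correctly senses that the split between three solutions for $j=0,3$ and four for $j=1,2$ mirrors Lemmas \ref{lm12a} and \ref{lm12b}, but it is missing the one idea that actually produces the exponential gap. The paper takes \emph{three} solutions $X_1<X_2<X_3$ (extracted from the three or four given ones so that $X_2>\tfrac{1}{2^{5/2}}X_1^{3/2}$), and raises the three quantities $(X_i+\sqrt{-D})/(X_i-\sqrt{-D})$ to the exponents $e_1=\pm k_2l_3\pm k_3l_2$, $e_2=\pm k_3l_1\pm k_1l_3$, $e_3=\pm k_1l_2\pm k_2l_1$ --- the $2\times 2$ minors of the exponent matrix --- so that the contributions of $\gp_1,\gp_2$ cancel \emph{identically} on the ideal level and the product is a root of unity. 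This reduces everything to the archimedean linear form $\Lm=\sum e_i\arg(X_i\pm\sqrt{-D})$, which is a multiple of $2\pi/f$; one then bounds $\Lm$ above by roughly $KL\sqrt{D}/X_1$ (if $\Lm\neq 0$) or uses the triangle inequality together with $X_2\gg X_1^{3/2}$ (if $\Lm=0$), and in either case concludes $KL\gg y_1^{1/4}$ where $K,L$ are the largest exponents of $p_1,p_2$. The exponential conclusion then falls out of $y_3\geq\max\{p_1^K,p_2^L\}$ and $\max\{K\log p_1,L\log p_2\}\geq\sqrt{KL\log p_1\log p_2}\geq c\,y_1^{1/8}$, which is exactly where the constant $c=\sqrt{\log 2\log 3/2^{7/2}}$ comes from. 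No transcendence input is used anywhere.

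Neither of the two routes you actually propose can close this. A $\gp$-adic or two-variable Baker/Yu bound applied to $\theta_i\theta_j^{-1}-1$ for \emph{two} solutions carries enormous absolute constants and cannot yield the clean explicit $c$ in the statement (you half-acknowledge this), and it also fails structurally: it compares valuations of size $\log y_1/\log p$ with heights of size $\log y_j$, which only ever gives polynomial relations between $y_1$ and $y_j$. Your fallback, substituting into $x_1+x_j\geq p_1^{k_{\min}}p_2^{l_{\min}}$, is precisely the content of Lemmas \ref{lm12a}--\ref{lm12b} and only reproduces the polynomial gap $x_3\gg y_1^{3/4}$, not $y_3>\exp(cy_1^{1/8})$. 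The step you are missing is the passage from a lower bound on the \emph{product of the exponents} $KL$ to a lower bound on $\log y_3$ via $y_3\geq p^{\max}$; that lower bound on $KL$ is obtained only by the three-term determinant trick, not by any pairwise comparison.
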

\begin{proof}
Assume that $S^{(j)}$ has three elements $x_1<x_2<x_3$ in the case $j=0, 3$
and four elements $x_1<x_2<x_3<x_4$ in the case $j=1, 2$.
By Lemmas \ref{lm12a} and \ref{lm12b},
we have $x_4>x_3>\frac{1}{2}(y_1/4)^{3/4}>\frac{1}{2^{5/2}}x_1^\frac{3}{2}$
and we have $x_3>x_2>\frac{1}{2}(y_1/4)^{3/4}>\frac{1}{2^{5/2}}x_1^\frac{3}{2}$
in both cases respectively.
So that, setting $(X_1, X_2, X_3)=(x_1, x_2, x_3)$ in the case $j=0, 3$
and $(X_1, X_2, X_3)=(x_1, x_3, x_4)$ in the case $j=1, 2$, we have
$X_3>X_2>\frac{1}{2^{5/2}}X_1^\frac{3}{2}$ in any case.

Moreover, (\ref{eq3998}) yields that
\begin{equation}
\left[\frac{X_i+\sqrt{-D}}{X_i-\sqrt{-D}}\right]=\left(\frac{\bar\gp_1}{\gp_1}\right)^{\pm k_i}\left(\frac{\bar\gp_2}{\gp_2}\right)^{\pm l_i}
\end{equation}
for each $i=1, 2, 3$.  Hence, we obtain
\begin{equation}
\left[\frac{X_1+\sqrt{-D}}{X_1-\sqrt{-D}}\right]^{e_1}
\left[\frac{X_2+\sqrt{-D}}{X_2-\sqrt{-D}}\right]^{e_2}
\left[\frac{X_3+\sqrt{-D}}{X_3-\sqrt{-D}}\right]^{e_3}=[1],
\end{equation}
where $e_1=\pm k_2l_3\pm k_3l_2, e_2=\pm k_3l_1\pm k_1l_3, e_3=\pm k_1l_2\pm k_2l_1$
with appropriate signs are not all zero.  In other words, we have
\begin{equation}
\left(\frac{X_1+\sqrt{-D}}{X_1-\sqrt{-D}}\right)^{fe_1}
\left(\frac{X_2+\sqrt{-D}}{X_2-\sqrt{-D}}\right)^{fe_2}
\left(\frac{X_3+\sqrt{-D}}{X_3-\sqrt{-D}}\right)^{fe_3}=1,
\end{equation}
where $f=6$ if $d=3$, $4$ if $d=1$ and $2$ otherwise.
This implies that $\Lm=e_1\arg(X_1\pm\sqrt{-D})+e_2\arg(X_2\pm\sqrt{-D})+e_3\arg(X_3\pm\sqrt{-D})$
must be a multiple of $2\pi/f$.

If $\Lm\neq 0$, then we see that
$\left(\frac{\abs{e_1}}{X_1}+\frac{\abs{e_2}}{X_2}+\frac{\abs{e_3}}{X_3}\right)\sqrt{D}>\abs{\Lm}\geq 2\pi/f$
and therefore $2.01fKL\sqrt{D}\geq 2X_1\pi$ and $1.92KL\sqrt{D}>X_1$.  Since $X_1=x_1>10^6 D$,
we have $1.92KL>2\sqrt{X_1}>(X_1^2+D)^{1/4}=y^{1/4}$.

Assume that $\Lm=0$.
If $e_1=0$, then we must have $\Lm=e_2\arg(X_2\pm\sqrt{-D})+e_3\arg(X_3\pm\sqrt{-D})=0$
and $(X_2^2+D)^{e_2}=(X_3^2+D)^{e_3}$.  Hence, we must have
$\abs{e_2}>\abs{e_3}>0$ and $\abs{\arg(X_2\pm\sqrt{-D})}>\abs{\arg(X_3\pm\sqrt{-D})}>0$ from $X_3>X_2$
and $\Lm\neq 0$, which is a contradiction.
Thus $e_1$ cannot be zero.
The triangle inequality immediately gives that
$\abs{\arg(X_1\pm\sqrt{-D})}\leq \abs{e_2\arg(X_2\pm\sqrt{-D})}+\abs{e_3\arg(X_3\pm\sqrt{-D})}$
and therefore
\begin{equation}
\frac{1}{\sqrt{X_1^2+D}}<\frac{e_2}{X_2}+\frac{e_3}{X_3}<\frac{2KL}{X_2}<\frac{8\sqrt{2}KL}{(X_1^2+D)^\frac{3}{4}}.
\end{equation}
Thus we obtain $8\sqrt{2}KL>(X_1^2+D)^{1/4}=y_1^{1/4}$.

Hence, in any case we have $8\sqrt{2}KL>y_1^{1/4}$
and $\max\{ K\log p_1, L\log p_2\}>y_1^{1/8}\sqrt{\log p_1\log p_2/(8\sqrt{2})}$.
Thus, we conclude that
\begin{equation}\label{eq3999}
X_3^2+D\geq \max\{p_1^K, p_2^L\}>\exp \left(y_1^{\frac{1}{8}}\sqrt{\frac{\log p_1\log p_2}{8\sqrt{2}}}\right)
\geq \exp \left(cy_1^\frac{1}{8}\right),
\end{equation}
proving the Lemma.
\end{proof}

\section{Proof of the Theorem}

We set $\delta_1=0.04377667$.  We shall begin by proving (i).

Let $y_1=x_1^2+D$ be the smallest solution in a given class $S^{(j)}(W, \infty; u, v)$
and $y_2=x_2^2+D$ be the third or fourth smallest one in this class for $j=0, 3$ or $j=1, 2$, respectively.
Lemma \ref{lm21} with $\delta=1/4$ gives that
\begin{equation}
y_2<\max \left\{4^{70} y_1^{71}, (2^{\frac{101}{18}}3^{\frac{16}{3}}D y_1^{\frac{31}{12}-\frac{\delta_1}{2}})^{1/\left(\frac{1}{24}-\frac{\delta_1}{2}\right)}\right\}.
\end{equation}
But Lemma \ref{lm31} immediately yields that $y_2>\exp (cy_1^{1/8})$.
We observe that these two inequalities are incompatible for $y_1\geq W=\max\{W_1, W_2\}$.
Hence, we see that $\# S^{(j)}(W, \infty; u, v)\leq 2$ for each $j, u, v$ for $j=0, 3$
and $\# S^{(j)}(W, \infty; u, v)\leq 3$ for each $j, u, v$ for $j=1, 2$.
Combining these estimates, we obtain $\# S(0, 1, W, \infty)\leq 30$
after the easy observation that $S(0, 1, W, \infty; 0, 0)$ must be empty since $W>D^2$.
This proves (i).

Now we shift our concern to smaller solutions.
Let $f(y)=y^{3/2}/2^{5/2}, g(y)=\exp (cy_1^{1/8})$ and $f^{(m)}$ be the $m$-th iteration of $f$.
$y_1=x^2+D=2^s p_1^k p_2^l$ denotes the smallest solution.  We have the following three cases.

Case 1. $D\geq Y$ or $y_1\geq Y$.

If $D\geq Y$, then $W=W_1<g(f^{(3)}(D))\leq g(f^{(3)}(y_1))$.
If $D\leq Y-1$ and $y_1\geq Y$, then we have that $W=W_2<g(f^{(3)}(Y))\leq g(f^{(3)}(y_1))$.
Hence, we always have $W\leq g(f^{(3)}(y_1))$ in Case 1
and therefore, using Lemmas \ref{lm12a} and \ref{lm12b},
we obtain $\# S^{(j)}(y_1, W)\leq 9$ if $j=0, 3$ and $\# S^{(j)}(y_1, W)\leq 5$ if $j=1, 2$.
So that, $\#S(0, 1, 0, \infty)\leq 30+28=58$.
This proves (ii).

Case 2. $D, y_1, p_2\leq Y-1$.

Let $W_3=f^{(2)}(Y)=3545401233665.83\cdots$.
Since $y_1\leq Y-1$, then $D\leq y_1\leq Y-1$ and $p_1\leq y_1\leq Y-1$.
A computer search revealed that
$\#S(0, 1, 0, W_3)\leq 13$ for any $D, p_1, p_2\leq Y-1$.
Since $W<g(f^{(3)}(Y))=g(f(W_3))$,
from Lemmas \ref{lm12a} and \ref{lm12b} we see that
$\# S^{(j)}(W_3, W)\leq 5$ if $j=0, 3$
and $\# S^{(j)}(W_3, W)\leq 3$ if $j=1, 2$.
This proves (iii).

Case 3. $D, y_1\leq Y-1$ and $p_2\geq Y$.

If $x^2+D=2^s p_1^k p_2^l\leq Y-1$, then,
since $p_2\geq Y$, we must have $x^2+D=2^s p_1^k$,
which has at most five solutions from the results mentioned in the introduction.
The number of the other solutions can be bounded as in Case 2
and we obtain (iv).  This completes the proof of the Theorem.

{}

{\small Tomohiro Yamada}\\
{\small Center for Japanese language and culture\\Osaka University\\562-8558\\8-1-1, Aomatanihigashi, Minoo, Osaka\\Japan}\\
{\small e-mail: \protect\normalfont\ttfamily{tyamada1093@gmail.com}}

\begin{thebibliography}{}
\bibitem{Ape}
R. Ap\'{e}ry, Sur une \'{e}quation diophantinne,
C. R. Acad. Sci. Paris, S\'{e}r. A \textbf{251} (1960), 1451--1452.

\bibitem{BH}
E. A. Bender and N. P. Herzberg,
Some diophantine equations related to the quadratic form $ax^2+by^2$,
{\it Studies in Algebra and Number Theory}, Academic Press, New York, 1979, 219--272.

\bibitem{BeSk}
Michael A. Bennett and Chris M. Skinner,
Ternary diophantine equations via Galois representations and modular forms,
Canad. J. Math. \textbf{56} (2004), 23--54.

\bibitem{BP}
Attila B\'{e}rczes and Istv\'{a}n Pink,
On generalized Lebesgue-Ramanujan-Nagell equations,
An. \cb{S}t. Univ. Ovivius Constan\cb{t}a \textbf{22} (2014), 51--71.

\bibitem{Beu1}
F. Beukers, On the generalize Ramanujan-Nagell equation I,
Acta Arith. \textbf{38} (1980/81), 389--410.

\bibitem{Beu2}
F. Beukers, On the generalize Ramanujan-Nagell equation II,
Acta Arith. \textbf{39} (1981), 123--132.

\bibitem{BHV}
Yu. Bilu, G. Hanrot and P. M. Voutier, Existence of primitive divisors of Lucas and Lehmer numbers,
J. Reine Angew. Math. \textbf{539} (2001), 75--122.

\bibitem{Bu1}
Yann Bugeaud, On some exponential diophantine equations,
Monatsh. Math. \textbf{132} (2001), 93--97.

\bibitem{BMS}
Yann Bugeaud, Maurice Mignotte and Samir Siksek,
Classical and modular approaches eto exponential diophantine equations II.
The Lebesgue-Nagell equation,
Compos. Math. \textbf{142} (2006), 31--62.

\bibitem{BuSh}
Yann Bugeaud and T. N. Shorey, On the number of solutions of the generalized Ramanujan-Nagell equation,
J. Reine Angew. Math. \textbf{539} (2001), 55--74.

\bibitem{Coh}
J. H. E. Cohn, The diophantine equation $x^2+C=y^n$,
Acta Arith. \textbf{65} (1993), 367--381.

\bibitem{Eve}
J.-H. Evertse, On equations in $S$-units and the Thue-Mahler equation,
Inv. Math. \textbf{75} (1984), 561--584.

\bibitem{GMT}
Hemar Godinho, Diego Marques and Alain Togb\'{e},
On the diophantine equation $x^2+C=y^n$ for $C=2^a 3^b 17^c$ and $C=2^a 13^b 17^c$,
Math. Slovaca \textbf{66} (2016), 565--574.

\bibitem{HL}
Yongzhong Hu and Maohua Le, On the number of the generalized Ramanujan-Nagell equation $D_1 x+D_2^m=p^n$,
Bull. Math. Soc. Sci. Math. Roumanie \textbf{55} (2012), 279--293.

\bibitem{Le1}
Maohua Le, The divisibility of the class number for a class of imaginary quadratic fields,
Kexue Tongbao \textbf{32} (1987), 724--727. (in Chinese)

\bibitem{Le11}
Maohua Le, The diophantine equation $x^2+D^m=p^n$,
Acta Arith. \textbf{52} (1989), 225--235.

\bibitem{Le12}
Maohua Le, Diophantine Equation $x^2+2^m=y^n$,
Chinese Sci. Bull. \textbf{42} (1997), 1515--1517.

\bibitem{Le2}
Maohua Le, On the diophantine equation $x^2+D=4p^n$,
J. Number Theory \textbf{41} (1997), 87--97.

\bibitem{Le3}
Maohua Le, On the diophantine equation $D_1 x^2+D_2=2^{n+2}$,
Acta Arith. \textbf{64} (1993), 29--41.

\bibitem{Le4}
Maohua Le, A note on the number of solutions of the generalized Ramanujan-Nagell equation $D_1 x^2+D_2=4p^n$,
J. Number Theory \textbf{62} (1997), 100--106.

\bibitem{Le5}
Maohua Le, On the diophantine equation $(x^3-1)/(x-1)=(y^n-1)/(y-1)$,
Trans. Amer. Math. Soc. \textbf{351} (1999), 1063--1074.

\bibitem{Leb}
M. Lebesgue, Sur L'impossibilit\'{e}, en nombres entiers, de l'\'{e}quation $x^m=y^2+1$,
Nouv. Ann. Math. \textbf{9} (1850), 178--180.

\bibitem{LL}
Ming-Guang Leu and Guan-Wei Li, The diophantine equation $2x^2+1=3^n$,
Proc. Amer. Math. Soc. \textbf{131} (2003), 3643--3645.

\bibitem{Lju}
Wilhelm Ljunggren, Noen setninger om ubestemte likninger av formen $\frac{x^n-1}{x-1}=y^q$,
Norsk. Mat. Tidsskr., \textbf{25} (1943), 17--20.

\bibitem{Nag1}
Trygve Nagell, Sur l'impossibilit\'{e} de quelques \'{e}quations a deux indet\'{e}rmin\'{e}es,
Norsk Mat. Foreninngs Skrifter I Nr. \textbf{13} (1923), 1--18.

\bibitem{Nag2}
T. Nagell, The diophantine equation $x^2+7=2^n$ (in Norwegian),
Nork. Mat. Tidsskr. \textbf{30} (1948), 62--64,
English version: Ark. Mat. \textbf{4} (1960), 185--187.

\bibitem{Pin}
Istv\'{a}n Pink, On the diophantine equation $x^2+2^\alpha 3^\beta 5^\gamma 7^\delta=y^n$,
Publ. Math. Debecen \textbf{70} (2007), 149--166.

\bibitem{SPTS}
T. N. Shorey, A. J. van der Poorten, R. Tijdeman and A. Schinzel,
Applications of the Gel'fond-Baker method to diophantine equations,
Transcendence Theory: Advances and Applications, Academic Press, London, 1977, 59--77.

\bibitem{Ski}
Chris Skinner, The diophantine equation $x^2=4q^n-4q+1$,
Pacific J. Math. 139 (1989), 303--309.

\bibitem{Sko}
T. Skolem, The use of $p$-adic method in the theory of diophantine equations,
Bull. Soc. Math. Belg.,
\textbf{7} (1955), 83--95.

\bibitem{TW} Richard Taylor and Andrew Wiles, 
Ring-Theoretic properties of certain Hecke algebras,
Ann. of Math. \textbf{141} (1995), 553--572.

\bibitem{Wil} 
Andrew Wiles, Modular elliptic curves and Fermat's Last Theorem,
Ann. of Math. \textbf{141} (1995), 443--551.

\bibitem{YH}
Pingzhi Yuan and Yongzhong Hu, On the diophantine equation $x^2+D^m=p^n$,
J. Number Theory \textbf{111} (2005), 144--153.

\end{thebibliography}
\end{document}